\hfill \footnotesize {\rm I. Nikoufar} \hfill  $~$}
\hfill \footnotesize {\rm Convexity of parameter extensions of some relative operator entropies}  \hfill$~$}
\begin{document}
\thispagestyle{empty}
 \setcounter{page}{1}

\begin{center}
{\large\bf Convexity of parameter extensions of some relative operator entropies 
with a perspective approach\footnote{to appear in Glasgow Mathematical Journal}}  \vskip.25in

{\bf Ismail Nikoufar} \\
{\footnotesize \textit{Department of Mathematics, Payame Noor University, P.O. Box 19395-3697 Tehran, Iran}\\
[-1mm] \textit{e-mail:} {\tt nikoufar@pnu.ac.ir}}\\[2mm]

\end{center}
\vskip 5mm

\noindent{\footnotesize{\bf Abstract.}
In this paper,
we introduce two notions of a relative operator $(\alpha, \beta)$-entropy and a Tsallis relative operator $(\alpha, \beta)$-entropy
as two parameter extensions of the relative operator entropy and the Tsallis relative operator entropy.
We apply a perspective approach to prove the joint convexity or concavity of these new notions, under certain conditions concerning $\alpha$ and $\beta$.
Indeed, we give the parametric extensions, but in such a manner that they remain jointly convex or jointly concave.

\noindent{\footnotesize{\bf Significance Statement.}
What is novel here is that we convincingly demonstrate how our techniques can be used to give simple proofs for the old
and new theorems for the functions that are relevant to quantum statistics.
Our proof strategy shows that the joint convexity of the perspective of some functions plays a crucial role to give simple proofs for
the joint convexity (resp. concavity) of some relative operator entropies.
\\
\\
{\it Mathematics Subject Classification.} 81P45, 15A39, 47A63, 15A42, 81R15.\\
{\it Key words and phrases:} perspective function, generalized perspective function, relative operator entropy, Tsallis relative operator entropy.
\\
\\
{$\star$ The notions introduced here were used in our published paper \cite{Nikoufar-op}, when this paper was a draft}.

  \newtheorem{df}{Definition}[section]
  \newtheorem{rk}[df]{Remark}
   \newtheorem{lem}[df]{Lemma}
   \newtheorem{thm}[df]{Theorem}
   \newtheorem{pro}[df]{Proposition}
   \newtheorem{cor}[df]{Corollary}
   \newtheorem{ex}[df]{Example}

 \setcounter{section}{0}
 \numberwithin{equation}{section}

\vskip .2in

\begin{center}
\section{\bf Introduction}
\end{center}

Let $\mathcal H$ be an infinite-dimensional (separable) Hilbert space. Let
$B({\mathcal H})$ denote the set of all bounded linear operators on $\mathcal H$,
$B({\mathcal H})_{sa}$ the set of all self--adjoint operators, $B({\mathcal H})^+$ the set of all positive operators, and $B({\mathcal H})^{++}$
the set of all strictly positive operators.
A continuous real function
$f$ on $[0,\infty)$ is said to be operator monotone (more precisely, operator monotone increasing)
if $A\leq B$ implies $f(A)\leq f(B)$ for $A,B\in B({\mathcal H})_{sa}$.
For a self--adjoint operator $A$, the value $f(A)$ is defined via functional calculus
as usual.
The function $f$ is called operator convex if
\begin{equation}\label{convex-ineq1}
f(cA_1+(1-c)A_2)\leq cf(A_1)+(1-c)f(A_2)
\end{equation}
for all $A_1,A_2\in B({\mathcal H})_{sa}$ and $ c\in [0,1]$.
Moreover, the function $f$ is operator concave if
$-f$ is operator convex.
The function $g$ of two variables is called jointly convex if
\begin{equation}\label{convex-ineq2}
g(cA_1+(1-c)A_2,cB_1+(1-c)B_2)\leq cg(A_1,B_1)+(1-c)g(A_2,B_2)
\end{equation}
for all $A_1,A_2,B_1,B_2\in B({\mathcal H})_{sa}$ and $ c\in [0,1]$,
and jointly concave if the sign of inequality \eqref{convex-ineq2} is reversed.

Let $f$ and $h$ be two functions defined on $[0,\infty)$ and $(0,\infty)$, respectively and let $h$ be a strictly positive function, in the sense that,
$h(A)\in B({\mathcal H})^{++}$ for $A\in B({\mathcal H})^{++}$.
We introduced in \cite{Nikoufar-PNAS2011} a fully noncommutative perspective
of two variables (associated to $f$), by choosing an appropriate ordering, as follows:
$$
\Pi_{f}(A,B):=A^{1/2}f(A^{-1/2}BA^{-1/2})A^{1/2}
$$
for $A\in B({\mathcal H})^{++}$ and $B\in B({\mathcal H})_{sa}$.
We also introduced the operator version of a fully noncommutative generalized perspective of two variables
(associated to $f$ and $h$) as follows:
$$
\Pi_{f\Delta h}(A,B):=h(A)^{1/2}f(h(A)^{-1/2}Bh(A)^{-1/2})h(A)^{1/2}
$$
for $A\in B({\mathcal H})^{++}$ and $B\in B({\mathcal H})_{sa}$.
This beautiful contribution can surely affect quantum information theory and quantum statistical mechanics.
Noncommutative functional analysis
gives an appropriate framework for many of the calculations
in quantum information theory and nonclassical
techniques that clarify some of the conceptual problems
in operator convexity theory.
Note that the introduced perspective $\Pi_f$ with the operator monotone function $f$ is the operator mean introduced by Kubo and Ando in \cite{Kubo-Ando}.

By recalling that if for every continuous function $f$,
$f(A)$ commutes with every operator commuting with $A$ (including $A$ itself) and when we restricted to positive commuting matrices, i.e., $[A,B]=0$,
it becomes Effros's approach which is considered in \cite{Effros} as follows:
\begin{align*}
\Pi_{f}(A,B)&:=f(\frac{B}{A})A,\\
\Pi_{f\Delta h}(A,B)&:=f(\frac{B}{h(A)})h(A).
\end{align*}

Afterwards, we introduced the notion of the non-commutative perspective in \cite{Nikoufar-PNAS2011},
this notion was studied by Effros and Hansen in \cite{AFA}.
They proved that the non-commutative perspective of an operator
convex function is the unique extension of the corresponding commutative perspective
that preserves homogeneity and convexity.

In \cite{Nikoufar-PNAS2011}, we proved several striking matrix analogues
of a classical result for operator convex functions. Indeed, we proved the following two theorems
that entail the necessary and sufficient conditions for the joint convexity of
a fully noncommutative perspective and generalized perspective function.
We applied the affine version of Hansen--Pedersen--Jensen inequality \cite[Theorem 2.1]{Hansen} to prove the following result:

\begin{thm}\label{pers-cnx}
The function $f$ is operator convex (concave) if and only if the perspective function $\Pi_{f}$ is jointly convex (concave).
\end{thm}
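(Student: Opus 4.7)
The plan is to prove the two implications separately, with essentially all of the work in the ``operator convex $\Rightarrow$ jointly convex'' direction.

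For the easy direction, suppose $\Pi_f$ is jointly convex. Specialize inequality \eqref{convex-ineq2} to $A_1 = A_2 = I$. Since $\Pi_f(I, B) = f(B)$ directly from the definition, the inequality collapses to \eqref{convex-ineq1}, which is exactly operator convexity of $f$. The concave case is identical.

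For the forward direction, fix $A_1, A_2 \in B(\mathcal H)^{++}$, self-adjoint $B_1, B_2$ with spectra appropriate for the functional calculus of $f$, and $c \in [0,1]$. Write $A := cA_1 + (1-c)A_2$ and $B := cB_1 + (1-c)B_2$. The crucial algebraic device is the pair of operators $T_i := A_i^{1/2} A^{-1/2}$ for $i = 1,2$, which produces the ``column isometry'' identity
$$
cT_1^* T_1 + (1-c)T_2^* T_2 \;=\; A^{-1/2}\bigl(cA_1 + (1-c)A_2\bigr)A^{-1/2} \;=\; I.
$$
Setting $X_i := A_i^{-1/2} B_i A_i^{-1/2}$, a direct computation gives $T_i^* X_i T_i = A^{-1/2} B_i A^{-1/2}$ and therefore
$$
A^{-1/2} B A^{-1/2} \;=\; c T_1^* X_1 T_1 + (1-c) T_2^* X_2 T_2 .
$$

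With these two identities in hand, the affine version of the Hansen--Pedersen--Jensen inequality (Hansen \cite{Hansen}, Theorem 2.1, as flagged in the excerpt) applies to the operator convex $f$ and the weighted ``contractions'' $\sqrt{c_i}\,T_i$, and yields
$$
f\bigl(A^{-1/2} B A^{-1/2}\bigr) \;\le\; c T_1^* f(X_1) T_1 + (1-c) T_2^* f(X_2) T_2 .
$$
Conjugating both sides by $A^{1/2}$ and using $A^{1/2} T_i^* = A_i^{1/2}$ transforms the left-hand side into $\Pi_f(A,B)$ and the right-hand side into $c\,\Pi_f(A_1,B_1) + (1-c)\,\Pi_f(A_2,B_2)$, proving joint convexity. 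The concave case is obtained by replacing $f$ by $-f$.

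The main obstacle is to recognize the correct substitution: the choice $T_i = A_i^{1/2} A^{-1/2}$ is precisely what transforms a convex combination of the inner arguments $A_i^{-1/2} B_i A_i^{-1/2}$ into the single inner argument $A^{-1/2} B A^{-1/2}$ of $\Pi_f(A,B)$, while simultaneously rendering $\{\sqrt{c}\,T_1, \sqrt{1-c}\,T_2\}$ an admissible system for the Jensen-type inequality. A secondary subtlety is that one must use the \emph{affine} form of Hansen--Pedersen--Jensen rather than the classical form, so as not to need any a priori sign condition on $f(0)$; otherwise the argument does not cover the general operator convex $f$ allowed by the statement.
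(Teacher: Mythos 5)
Your proof is correct and is essentially the paper's argument: the paper proves this via the affine (unital) Hansen--Pedersen--Jensen inequality with exactly the substitution you use, as can be seen from its displayed proof of Theorem \ref{nikoufar-Ph.D.}(i), where $T_i=(c_ih(A_i))^{1/2}h(A)^{-1/2}$ specializes at $h(t)=t$ to your $\sqrt{c_i}\,A_i^{1/2}A^{-1/2}$, and the converse is likewise obtained by evaluating at $A_1=A_2=I$. The only cosmetic difference is that the paper absorbs the weights $c,1-c$ into the operators $T_i$ rather than carrying them separately.
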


We also used Hansen--Pedersen--Jensen inequality \cite[Theorem 2.1]{HansenPeder} to prove the following result:
\begin{thm}\label{proc-nation}
Suppose that $f$ and $h$ are continuous functions with $f(0)<0$ and $h>0$. Then $f$ is operator convex and $h$ is operator concave if and only if the generalized perspective function $\Pi_{f\Delta h}$ is jointly convex.
\end{thm}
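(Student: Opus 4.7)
The plan is to exploit the structural identity
$$\Pi_{f\Delta h}(A,B) = \Pi_{f}(h(A),B),$$
which is immediate from the definition, and then bootstrap off Theorem \ref{pers-cnx} together with the operator concavity of $h$. I will split the argument into a sufficiency direction and a necessary direction.

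For the sufficiency direction, my first move is to establish a monotonicity lemma: if $f$ is operator convex with $f(0)\leq 0$, then $X\mapsto\Pi_{f}(X,B)$ is operator monotone decreasing on $B({\mathcal H})^{++}$ for every fixed $B$. This is obtained by combining the manifest positive homogeneity $\Pi_{f}(\lambda A,\lambda B)=\lambda\Pi_{f}(A,B)$ with the joint convexity furnished by Theorem \ref{pers-cnx}, which together yield subadditivity $\Pi_{f}(A_1+A_2,B_1+B_2)\leq\Pi_{f}(A_1,B_1)+\Pi_{f}(A_2,B_2)$. Setting $(A_1,B_1)=(X,B)$ and $(A_2,B_2)=(\Delta,0)$, and computing $\Pi_{f}(\Delta,0)=f(0)\Delta$, gives $\Pi_{f}(X+\Delta,B)\leq\Pi_{f}(X,B)+f(0)\Delta\leq\Pi_{f}(X,B)$. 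With the lemma in hand, for $A=cA_1+(1-c)A_2$ and $B=cB_1+(1-c)B_2$, operator concavity of $h$ gives $h(A)\geq ch(A_1)+(1-c)h(A_2)$, and the monotonicity lemma followed by the joint convexity of $\Pi_{f}$ chains as
$$\Pi_{f}(h(A),B)\leq\Pi_{f}(ch(A_1)+(1-c)h(A_2),cB_1+(1-c)B_2)\leq c\Pi_{f}(h(A_1),B_1)+(1-c)\Pi_{f}(h(A_2),B_2).$$
Rewriting via the identity above delivers the joint convexity of $\Pi_{f\Delta h}$.

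For the necessary direction, I would extract the two conclusions separately by specializing the joint convexity inequality for $\Pi_{f\Delta h}$. Taking $B_1=B_2=0$, the identity $\Pi_{f\Delta h}(A,0)=f(0)h(A)$ reduces the inequality to operator convexity of $A\mapsto f(0)h(A)$; because $f(0)<0$ strictly, this forces $h$ to be operator concave. Then, fixing any $A_0\in B({\mathcal H})^{++}$ and taking $A_1=A_2=A_0$, the joint convexity collapses to convexity in $B$ alone; after the affine substitution $B=h(A_0)^{1/2}Yh(A_0)^{1/2}$ and stripping the outer conjugation by $h(A_0)^{1/2}$, one reads off operator convexity of $f$.

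The step I expect to require the most care is the monotonicity lemma in the sufficiency direction, since this is precisely where the sign hypothesis $f(0)\leq 0$ enters; without it, operator concavity of $h$ could not be turned into an upper bound inside $\Pi_{f}$. A secondary, more cosmetic concern is keeping the non-commutativity straight in the expression $h(A)^{-1/2}Bh(A)^{-1/2}$, but since $\Pi_{f\Delta h}$ is literally $\Pi_{f}$ composed with the substitution $A\mapsto h(A)$ in its first slot, this is a clean formal manipulation rather than a genuine technical obstacle. Note that the alternative route via a direct application of the Hansen–Pedersen–Jensen inequality, as in the proof of Theorem \ref{pers-cnx}, should also work but would essentially re-derive the lemma inside the main argument.
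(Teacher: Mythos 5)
Your proof is correct, but the sufficiency direction follows a genuinely different route from the paper's. The paper (in the proof of Theorem \ref{nikoufar-Ph.D.}(i), which is the relevant half here) applies the Hansen--Pedersen--Jensen inequality a second time, directly to the generalized perspective: it sets $T_1=(ch(A_1))^{1/2}h(A)^{-1/2}$ and $T_2=((1-c)h(A_2))^{1/2}h(A)^{-1/2}$, observes that operator concavity of $h$ is exactly the statement $T_1^*T_1+T_2^*T_2\leq 1$, and then invokes the non-unital Jensen inequality for operator convex $f$ with $f(0)\leq 0$. You instead treat Theorem \ref{pers-cnx} as a black box and extract from it, via positive homogeneity and hence subadditivity of $\Pi_f$, the monotone decrease of $X\mapsto\Pi_f(X,B)$ when $f(0)\leq 0$; the concavity of $h$ then enters through this monotonicity rather than through a contraction identity. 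Both arguments consume the sign hypothesis at the analogous point (you via $\Pi_f(\Delta,0)=f(0)\Delta\leq 0$, the paper via the non-unital form of Jensen's inequality), so neither is cheaper in hypotheses; yours is more modular in that it reuses the already-established joint convexity of the plain perspective instead of re-running the Jensen machinery, at the cost of an extra monotonicity lemma. One small point to tidy: in that lemma $\Delta=Y-X$ need only be positive semidefinite, while subadditivity is available for strictly positive first arguments, so you should pass through $\Delta+\varepsilon$ and let $\varepsilon\downarrow 0$, using norm continuity of $\Pi_f(\cdot,B)$ on $B(\mathcal H)^{++}$. The necessity direction is essentially the paper's: your $B_1=B_2=0$ specialization is part (v) of Theorem \ref{nikoufar-Ph.D.} (the identity $\Pi_{f\Delta h}(A,0)=f(0)h(A)$ with $f(0)<0$ reversing the inequality), and your fixed-$A_0$ slice with the affine substitution is part (iii) with $A_0=1$.
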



In the `if' part of the above theorem, I would remark that we could allow $f(0)\leq 0$.
However, for the next applications in the `only if' part the condition $f(0)\neq 0$ is essential.
So, this theorem and its reverse one can be modified as follows.
Note that part (ii) of Theorem \ref{nikoufar-Ph.D.} is a correct version of \cite[Corollary 2.6 (i)]{Nikoufar-PNAS2011}
and parts (iii) and (v) are a complete and correct version of \cite[Corollary 2.6 (ii)]{Nikoufar-PNAS2011}.
We include the proofs for the convenience of the readers.

\begin{thm} \label{nikoufar-Ph.D.}
Suppose that $f$ and $h$ are continuous functions and $h>0$.
\begin{itemize}
  \item[(i)]
If $f$ is operator convex and $h$ is operator concave with $f(0)\leq 0$, then the generalized perspective function
$\Pi_{f\Delta h}$ is jointly convex.
  \item[(ii)]
If $f$ and $h$ are operator concave with $f(0)\geq 0$, then the generalized perspective function
$\Pi_{f\Delta h}$ is jointly concave.
  \item[(iii)] If the generalized perspective function
$\Pi_{f\Delta h}$ is jointly convex (concave), then $f$ is operator convex (concave).
  \item[(iv)] If $f(0)>0$ and the generalized perspective function
$\Pi_{f\Delta h}$ is jointly convex (concave), then $h$ is operator convex (concave).
  \item[(v)] If $f(0)<0$ and the generalized perspective function
$\Pi_{f\Delta h}$ is jointly convex (concave), then $h$ is operator concave (convex).
 \end{itemize}
\end{thm}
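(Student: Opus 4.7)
The plan is to reduce the five parts of Theorem \ref{nikoufar-Ph.D.} either to Theorem \ref{proc-nation} (for (i) and, via (i), for (ii)) or to specializing one of the two arguments of $\Pi_{f\Delta h}$ (for (iii)--(v)).

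For part (i), since Theorem \ref{proc-nation} already covers the strict case $f(0)<0$, I would bridge to the boundary case $f(0)\le 0$ by a perturbation. Setting $f_\varepsilon:=f-\varepsilon$ for $\varepsilon>0$ keeps $f_\varepsilon$ operator convex and makes $f_\varepsilon(0)\le-\varepsilon<0$, so Theorem \ref{proc-nation} produces joint convexity of $\Pi_{f_\varepsilon\Delta h}$. A direct calculation gives $\Pi_{f_\varepsilon\Delta h}(A,B)=\Pi_{f\Delta h}(A,B)-\varepsilon\,h(A)$, and letting $\varepsilon\downarrow 0$ preserves the operator inequality \eqref{convex-ineq2}. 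Part (ii) I would then deduce from (i) by negation: if $f$ is operator concave with $f(0)\ge 0$ then $-f$ is operator convex with $(-f)(0)\le 0$, so part (i) applied to $(-f,h)$ gives joint convexity of $\Pi_{-f\Delta h}=-\Pi_{f\Delta h}$, that is, joint concavity of $\Pi_{f\Delta h}$.

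For the converse direction, I would prove part (iii) by freezing $A_1=A_2=A_0\in B(\mathcal H)^{++}$ and writing $H_0:=h(A_0)\in B(\mathcal H)^{++}$; joint convexity (resp.\ concavity) of $\Pi_{f\Delta h}$ then forces convexity (resp.\ concavity) in $B$ of the map $B\mapsto H_0^{1/2}f(H_0^{-1/2}BH_0^{-1/2})H_0^{1/2}$, and the affine change of variables $C:=H_0^{-1/2}BH_0^{-1/2}$ followed by conjugation by $H_0^{-1/2}$ on both sides reads off the operator convexity (resp.\ concavity) inequality for $f$. For parts (iv) and (v), I would instead freeze $B_1=B_2=0$ and exploit the identity $\Pi_{f\Delta h}(A,0)=f(0)h(A)$; the joint convexity hypothesis then reduces to
\begin{equation*}
f(0)\,h(cA_1+(1-c)A_2)\le f(0)\bigl(c\,h(A_1)+(1-c)\,h(A_2)\bigr)
\end{equation*}
(with the reversed inequality in the jointly concave case), and dividing by $f(0)$ yields operator convexity or concavity of $h$ according to the sign of $f(0)$, exactly matching (iv) and (v).

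I do not foresee a genuine obstacle: the substantive analytic content is already contained in Theorems \ref{pers-cnx} and \ref{proc-nation}. The only items to verify are that shifting by a constant preserves operator convexity and that the pointwise limit of jointly convex operator-valued maps remains jointly convex, both of which are immediate consequences of \eqref{convex-ineq2}. The residual care required is bookkeeping: matching the signs of $f(0)$ across (iv) and (v), and tracking the convex/concave dichotomy through the negation used in (ii).
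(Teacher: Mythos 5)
Your proposal is correct, and parts (ii)--(v) follow essentially the same route as the paper: (ii) by negating $f$, (iii) by freezing the first argument (the paper takes $A_0=1$ so that $H_0=h(1)$ is a scalar, but your conjugation argument for general $A_0$ is equivalent), and (iv)--(v) from the identity $\Pi_{f\Delta h}(A,0)=f(0)h(A)$ exactly as in the text. The genuine difference is in part (i). The paper proves (i) directly and self-containedly: it sets $T_1:=(ch(A_1))^{1/2}h(A)^{-1/2}$ and $T_2:=((1-c)h(A_2))^{1/2}h(A)^{-1/2}$, observes that operator concavity of $h$ gives $T_1^*T_1+T_2^*T_2\leq 1$, and then applies the Hansen--Pedersen--Jensen inequality for the operator convex $f$ with $f(0)\leq 0$; no perturbation is needed because that form of Jensen's inequality already accommodates $f(0)\leq 0$. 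You instead treat Theorem \ref{proc-nation} as a black box for the strict case $f(0)<0$ and reach the boundary case by the shift $f_\varepsilon=f-\varepsilon$, using $\Pi_{f_\varepsilon\Delta h}(A,B)=\Pi_{f\Delta h}(A,B)-\varepsilon h(A)$ and the closedness of the positive cone to pass to the limit in \eqref{convex-ineq2}. Both are valid; your reduction is shorter and cleanly isolates why the weakening from $f(0)<0$ to $f(0)\leq 0$ is harmless, while the paper's direct argument exhibits the mechanism (the sub-unital family $T_1,T_2$ produced by the concavity of $h$) that actually drives the inequality, which is the point the author wants to showcase for the later entropy applications.
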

\begin{proof}
(i)
For the strictly positive operators $A_1,A_2$, the self--adjoint operators $B_1,B_2$, and $ c\in [0,1]$
set $A:=cA_1+(1-c)A_2$ and $B:=cB_1+(1-c)B_2$.
Define $T_1:=(ch(A_1))^{1/2}h(A)^{-1/2}$ and $T_2:=((1-c)h(A_2))^{1/2}h(A)^{-1/2}$.
The concavity of $h$ gives $T_1^*T_1+T_2^*T_2\leq1$ and
the operator convexity of $f$ together with Hansen--Pedersen--Jensen inequality \cite{HansenPeder} imply
\begin{align*}
\Pi_{f\Delta h}(A,B)&=h(A)^{1/2}f(h(A)^{-1/2}B h(A)^{-1/2})h(A)^{1/2}\\
&=h(A)^{1/2}f\left(T_1^*h(A_1)^{-1/2}B_1h(A_1)^{-1/2}T_1+T_2^*h(A_2)^{-1/2}B_2h(A_2)^{-1/2}T_2\right)h(A)^{1/2}\\
&\leq h(A)^{1/2}\left(T_1^*f(h(A_1)^{-1/2}B_1h(A_1)^{-1/2})T_1\right. \\
&\ \ +\left. T_2^*f(h(A_2)^{-1/2}B_2h(A_2)^{-1/2})T_2\right)h(A)^{1/2}\\
&=c h(A_1)^{1/2}f(h(A_1)^{-1/2}B_1h(A_1)^{-1/2})h(A_1)^{1/2}\\
&\ \ +(1-c)h(A_2)^{1/2}f(h(A_2)^{-1/2}B_2h(A_2)^{-1/2})h(A_2)^{1/2}\\
&=c \Pi_{f\Delta h}(A_1,B_1)+(1-c)\Pi_{f\Delta h}(A_2,B_2).
\end{align*}
(ii) It follows from (i) by replacing $-f$ with $f$.

(iii) A simple computation shows that $f(A)=\frac{1}{h(1)}\Pi_{f\Delta h}(1,h(1)A)$.
Then, by using the joint convexity of $\Pi_{f\Delta h}$, for the self--adjoint operators $A_1,A_2$ and $0\leq c\leq 1$ we have
\begin{align*}
f(cA_1+(1-c)A_2)&=\frac{1}{h(1)}\Pi_{f\Delta h}(1,h(1)(cA_1+(1-c)A_2))\\
&=\frac{1}{h(1)}\Pi_{f\Delta h}(1,ch(1)A_1+(1-c)h(1)A_2)\\
&\leq\frac{1}{h(1)}(c\Pi_{f\Delta h}(1,h(1)A_1)+(1-c)\Pi_{f\Delta h}(1,h(1)A_2))\\
&=c f(A_1)+(1-c)f(A_2).
\end{align*}

(iv) It is obvious that $h(A)=\frac{1}{f(0)}\Pi_{f\Delta h}(A,0)$.
By using $f(0)>0$, for the strictly positive operators $A_1,A_2$ and $0\leq c\leq 1$ we get
\begin{align*}
h(cA_1+(1-c)A_2)&=\frac{1}{f(0)}\Pi_{f\Delta h}(cA_1+(1-c)A_2,0)\\
&\leq\frac{1}{f(0)}(c\Pi_{f\Delta h}(A_1,0)+(1-c)\Pi_{f\Delta h}(A_2,0))\\
&=ch(A_1)+(1-c)h(A_2).
\end{align*}

(v) The proof is similar to that of (iv).
\end{proof}

\section{Parametric relative operator entropies}

Generalized entropies are used as alternate measures of an informational content.
Studies of generalized entropies allow to treat properties of the standard entropy in more general
setting. The connection between strong subadditivity of the von Neumann entropy and the Wigner--Yanase--Dyson
conjecture is a remarkable example (see \cite{Hiai, Jencova}).

In this section, we show usefulness of the notions of the perspective and the generalized perspective to obtain
the joint convexity of the (quantum) relative operator entropy, the joint concavity of the Fujii--Kamei relative operator entropy and
the Tsallis relative operator entropy, and moreover the joint convexity (concavity) of some other well-known operators.

Yanagi et al. \cite{Yanagi} defined the notion of the Tsallis relative operator entropy and gave its properties
and the generalized Shannon inequalities.
Furuichi et al. \cite{Furuichi} defined this notion as a parametric extension
of the relative operator entropy and proved some operator inequalities related to the Tsallis relative operator entropy.
For the strictly positive matrices $A,B$ and $0<\lambda\leq 1$,
$$
T_{\lambda}(A|B):=\frac{A^{\frac{1}{2}}(A^{-\frac{1}{2}}BA^{-\frac{1}{2}})^{\lambda}A^{\frac{1}{2}}-A}{\lambda}
$$
is called the Tsallis relative operator entropy between $A$ and $B$ \cite{Yanagi}.
We often rewrite the Tsallis relative operator entropy $T_{\lambda}(A|B)$ as
$$
T_{\lambda}(A|B)=A^{\frac{1}{2}}\ln_{\lambda}(A^{-\frac{1}{2}}BA^{-\frac{1}{2}})A^{\frac{1}{2}},
$$
where $\ln_{\lambda}X\equiv\frac{X^{\lambda}-1}{\lambda}$ for the positive operator $X$ \cite{Furuichi}.

We give a generalized notion of the Tsallis relative operator entropy and call it a Tsallis relative operator $(\alpha, \beta)$-entropy.
We define
$$
T_{\alpha, \beta}(A|B):=A^{\frac{\beta}{2}}\ln_{\alpha}(A^{-\frac{\beta}{2}}BA^{-\frac{\beta}{2}})A^{\frac{\beta}{2}}
$$
for the strictly positive operators $A,B$ and the real numbers $\alpha\neq 0, \beta$.
It is clear that every Tsallis relative operator $(\lambda, 1)$-entropy is the Tsallis relative operator entropy, i.e., $T_{\lambda}(A|B)=T_{\lambda, 1}(A|B)$.
We want to establish the joint convexity or concavity of $T_{\alpha, \beta}$ with a perspective approach, namely, we find the functions $f, h$ such that
$T_{\alpha, \beta}(A|B)=\Pi_{f\Delta h}(A,B)$. In particular, we reach a simple result on the joint convexity or concavity of the Tsallis relative operator entropy.

\begin{lem}\label{ln}
The function $\ln_{\lambda}(t)$ is operator convex for $\lambda\in [1,2]$ and operator concave for $\lambda\in [-1,0)\cup(0,1]$.
\end{lem}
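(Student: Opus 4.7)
The plan is to reduce the claim to the well-known operator convexity/concavity properties of the power function $t\mapsto t^\lambda$ on $(0,\infty)$. Since
\[
\ln_{\lambda}(t)=\frac{t^{\lambda}-1}{\lambda},
\]
$\ln_\lambda$ is nothing more than an affine transformation of $t^{\lambda}$ with leading coefficient $1/\lambda$ (the additive constant $-1/\lambda$ is irrelevant for convexity or concavity). Hence the \emph{sign} of $\lambda$ determines whether the operator convexity of $t^{\lambda}$ is preserved or reversed when passing to $\ln_\lambda$.

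The key input I would invoke is the L\"owner--Heinz theorem and its classical extension (as, e.g., in Hansen--Pedersen): on $(0,\infty)$, the function $t\mapsto t^{\lambda}$ is operator concave for $\lambda\in[0,1]$ and operator convex for $\lambda\in[-1,0]\cup[1,2]$. Once this is at hand, the three cases of the lemma follow immediately.

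I would then split into three cases. Case (a): for $\lambda\in[1,2]$, $t^{\lambda}$ is operator convex and $1/\lambda>0$, so $\ln_{\lambda}$ is operator convex. Case (b): for $\lambda\in(0,1]$, $t^{\lambda}$ is operator concave and $1/\lambda>0$, so $\ln_{\lambda}$ is operator concave. Case (c): for $\lambda\in[-1,0)$, $t^{\lambda}$ is operator convex but now $1/\lambda<0$; multiplying a convex function by a negative scalar flips convexity to concavity, and therefore $\ln_{\lambda}$ is operator concave.

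There is no real obstacle beyond bookkeeping: the only point requiring care is the sign reversal in case (c), and remembering to quote the extended operator-convexity range $[-1,2]$ for $t^{\lambda}$ rather than the narrower classical L\"owner--Heinz interval $[0,1]$. As a sanity check, at the overlap point $\lambda=1$ one has $\ln_{1}(t)=t-1$, which is affine and hence simultaneously operator convex and concave, consistent with both the $[1,2]$ and the $(0,1]$ verdicts.
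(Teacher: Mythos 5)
Your proposal is correct and follows the same route as the paper, which likewise derives the lemma directly from the known operator convexity/concavity ranges of $t^{\lambda}$; you merely spell out the sign bookkeeping for the factor $1/\lambda$ (in particular the reversal for $\lambda\in[-1,0)$) that the paper leaves implicit.
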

\begin{proof}
The result follows from the operator convexity or concavity of the elementary function $t^{\lambda}$.
\end{proof}

\begin{thm}\label{gen.entropy}
The Tsallis relative operator $(\alpha, \beta)$-entropy is jointly convex for $\alpha\in [1,2]$ and $\beta\in [0,1]$.
\end{thm}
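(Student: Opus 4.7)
The plan is to realize $T_{\alpha,\beta}$ as a generalized perspective and then apply Theorem \ref{nikoufar-Ph.D.}(i). Concretely, I would set $f(t):=\ln_\alpha(t)=\frac{t^\alpha-1}{\alpha}$ and $h(t):=t^\beta$. Then $h(A)^{1/2}=A^{\beta/2}$ and $h(A)^{-1/2}=A^{-\beta/2}$, so directly from the definition
$$\Pi_{f\Delta h}(A,B)=h(A)^{1/2}f\bigl(h(A)^{-1/2}Bh(A)^{-1/2}\bigr)h(A)^{1/2}=A^{\beta/2}\ln_\alpha\bigl(A^{-\beta/2}BA^{-\beta/2}\bigr)A^{\beta/2}=T_{\alpha,\beta}(A|B).$$
Thus joint convexity of $T_{\alpha,\beta}$ is equivalent to joint convexity of $\Pi_{f\Delta h}$ for this particular pair $(f,h)$.

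Next I would verify the three hypotheses of Theorem \ref{nikoufar-Ph.D.}(i). First, $h(t)=t^\beta$ is strictly positive on $(0,\infty)$ and operator concave for $\beta\in[0,1]$, by the standard Löwner--Heinz theorem on the operator concavity of $t\mapsto t^\beta$. Second, Lemma \ref{ln} yields operator convexity of $f(t)=\ln_\alpha(t)$ on $[0,\infty)$ for $\alpha\in[1,2]$. Third, since $\alpha>0$ we have $f(0)=\ln_\alpha(0)=\frac{0-1}{\alpha}=-\frac{1}{\alpha}<0$, in particular $f(0)\leq 0$.

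With these three conditions in place, Theorem \ref{nikoufar-Ph.D.}(i) immediately gives that $\Pi_{f\Delta h}$ is jointly convex on $B(\mathcal H)^{++}\times B(\mathcal H)^{++}$, and hence so is $T_{\alpha,\beta}$. The only mild subtlety is the boundary case $\beta=0$, where $h\equiv 1$ and $\Pi_{f\Delta h}(A,B)=f(B)=\ln_\alpha(B)$; this reduces the statement to operator convexity of $\ln_\alpha$, which is again supplied by Lemma \ref{ln}, so the conclusion continues to hold trivially.

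There is essentially no hard step here once the perspective is identified: the whole argument is a matter of matching $T_{\alpha,\beta}$ against the perspective template and checking the sign of $f(0)$ and the operator convexity/concavity hypotheses. The main point one should not overlook is that $f(0)<0$ is automatic for $\alpha\in[1,2]$, so one does not need to exclude any boundary value of $\alpha$, and that $\beta$ is permitted to be $0$ because Theorem \ref{nikoufar-Ph.D.}(i) only requires $h>0$ and operator concavity, both of which survive at $\beta=0$.
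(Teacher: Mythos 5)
Your proof is correct and follows exactly the paper's own route: identify $T_{\alpha,\beta}$ with the generalized perspective $\Pi_{\ln_\alpha t\,\Delta\, t^\beta}$, then invoke Lemma \ref{ln} and Theorem \ref{nikoufar-Ph.D.}(i). The extra checks you record (that $\ln_\alpha(0)=-1/\alpha<0$ and that the case $\beta=0$ degenerates harmlessly) are left implicit in the paper but are worth making explicit.
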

\begin{proof}
Note that the Tsallis relative operator $(\alpha, \beta)$-entropy $T_{\alpha, \beta}$ is the generalized perspective of the functions $\ln_{\alpha}(t)$ and $t^{\beta}$,
in the sense that, $T_{\alpha, \beta}(A|B)=\Pi_{\ln_{\alpha}t\Delta t^{\beta}}(A,B)$.
Therefore, we obtain the result from Lemma \ref{ln} and Theorem \ref{nikoufar-Ph.D.} (i).
\end{proof}
We remark that the concavity assertion in Theorem \ref{gen.entropy} is doubtful.
In fact,  $\ln_{\alpha}(t)$ is operator concave for $\alpha\in[-1,0)\cup(0,1]$, where $\ln_{\alpha}(0)<0$, so Theorem \ref{nikoufar-Ph.D.} (ii) can not be applied.

Applying Theorem \ref{pers-cnx} the concavity assertion for the Tsallis relative operator entropy
is not doubtful.
\begin{thm}\label{Tsallis-thm}
The Tsallis relative operator entropy is jointly convex for $\alpha\in [1,2]$ and jointly concave for $\alpha\in [-1,0)\cup(0,1]$.
\end{thm}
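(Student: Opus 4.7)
The plan is to recognize the Tsallis relative operator entropy as the (ordinary) perspective of the function $\ln_{\alpha}$, rather than as a genuine generalized perspective with an auxiliary $h$, and then to invoke Theorem \ref{pers-cnx} in place of Theorem \ref{nikoufar-Ph.D.}. This is precisely the observation that rescues the concavity case, which was flagged as doubtful for the two-parameter extension.

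First I would verify the identification $T_{\alpha}(A|B)=\Pi_{\ln_{\alpha}}(A,B)$. Comparing the expression
$$T_{\alpha}(A|B)=A^{1/2}\ln_{\alpha}(A^{-1/2}BA^{-1/2})A^{1/2}$$
with the definition $\Pi_{f}(A,B)=A^{1/2}f(A^{-1/2}BA^{-1/2})A^{1/2}$ from the introduction, this is immediate upon setting $f=\ln_{\alpha}$. Equivalently, this is the $\beta=1$ special case of $T_{\alpha,\beta}$ with $h(t)=t$, reducing the generalized perspective to the plain perspective.

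Second, Lemma \ref{ln} already supplies the necessary regularity of the function: $\ln_{\alpha}(t)$ is operator convex for $\alpha\in[1,2]$ and operator concave for $\alpha\in[-1,0)\cup(0,1]$. Combining this with Theorem \ref{pers-cnx}, which states that $\Pi_{f}$ is jointly convex (resp.\ concave) if and only if $f$ is operator convex (resp.\ concave), yields both halves of the theorem at once.

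The main conceptual point, rather than a technical obstacle, is to explain why the concavity assertion holds here but not in Theorem \ref{gen.entropy}: Theorem \ref{pers-cnx} imposes no sign condition at the origin, whereas Theorem \ref{nikoufar-Ph.D.}(ii) requires $f(0)\geq 0$, which fails since $\ln_{\alpha}(0)=-1/\alpha<0$ for $\alpha\in(0,1]$. By specializing to the ordinary perspective we dispense with the problematic hypothesis entirely, so nothing further is needed beyond the two cited results.
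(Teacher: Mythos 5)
Your proposal is correct and follows exactly the paper's own argument: identify $T_{\alpha}(A|B)$ as the ordinary perspective $\Pi_{\ln_{\alpha}}(A,B)$, invoke Lemma \ref{ln} for the operator convexity/concavity of $\ln_{\alpha}$, and apply Theorem \ref{pers-cnx}. Your added remark explaining why the plain perspective sidesteps the sign condition $f(0)\geq 0$ is a helpful clarification but does not change the substance of the argument.
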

\begin{proof}
We have $\Pi_{\ln_{\alpha}t}(A,B)=T_{\alpha}(A|B)$
and the result follows from Lemma \ref{ln} and Theorem \ref{pers-cnx}.
\end{proof}

The notion of the relative operator entropy was introduced on strictly positive matrices in noncommutative information
theory by Fujii and Kamei \cite{Fujii-Kamei} as an extension of the operator entropy considered by
Nakamura and Umegaki \cite{Nakamura-Umegaki} and the relative operator entropy considered by
Umegaki \cite{Umegaki} as follows:
$$
S(A|B):=A^{\frac{1}{2}}(\log A^{-\frac{1}{2}}BA^{-\frac{1}{2}})A^{\frac{1}{2}}.
$$

Fujii et al. \cite{Fujii-Kamei} estimated the value of the relative operator entropy $S(A|B)$
by applying the Furuta's inequality and obtained the upper and lower bounds of $S(A|B)$.
It is obvious that $S(A|B)=\lim_{\alpha\to 0} T_{\alpha}(A|B)$ for $A,B>0$.
Hence, $S(A,B)$ is jointly concave by Theorem \ref{Tsallis-thm}.
We show that the joint concavity of the relative operator entropy is a simple consequence of the joint concavity of the perspective of the elementary function $f(t)=\log t$.
%
\begin{thm}\label{Fujii}
The Fujii--Kamei relative operator entropy $S(A|B)$ is jointly concave on the strictly positive operators $A,B$.
\end{thm}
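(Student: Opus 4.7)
The plan is to recognize $S(A|B)$ as the perspective $\Pi_f(A,B)$ for the scalar function $f(t)=\log t$, and then invoke Theorem \ref{pers-cnx}. Directly comparing definitions,
$$
\Pi_{\log t}(A,B) = A^{1/2}\bigl(\log A^{-1/2}BA^{-1/2}\bigr)A^{1/2} = S(A|B),
$$
so the joint concavity of $S$ reduces to the operator concavity of the scalar function $\log t$.

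The second ingredient is the well-known operator concavity of $\log$ on $(0,\infty)$. One can invoke this as a classical fact (it follows, e.g., from the integral representation $\log t = \int_0^\infty \bigl(\frac{1}{1+s}-\frac{1}{t+s}\bigr)\,ds$), or, more in keeping with the style of this paper, deduce it from Lemma \ref{ln}: since $\ln_\lambda t = (t^\lambda-1)/\lambda$ is operator concave for $\lambda\in(0,1]$ and $\log t = \lim_{\lambda\to 0^+}\ln_\lambda t$ pointwise, operator concavity is preserved in the limit on any interval bounded away from $0$. Once operator concavity of $\log$ is in hand, Theorem \ref{pers-cnx} immediately yields joint concavity of $\Pi_{\log t}$, hence of $S$.

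The only point deserving brief comment is that $\log$ is defined on $(0,\infty)$ rather than on the closed half-line $[0,\infty)$ stipulated in the ambient framework. Because we restrict to strictly positive $A,B$, the operator $A^{-1/2}BA^{-1/2}$ has spectrum contained in a compact subinterval of $(0,\infty)$, and so the functional calculus is unambiguous and the proof of Theorem \ref{pers-cnx} goes through verbatim after trivially shrinking the domain. I do not foresee any substantive obstacle; the entire argument is a two-line specialization of Theorem \ref{pers-cnx} once the identification $S = \Pi_{\log t}$ is noted.
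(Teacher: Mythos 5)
Your proposal is correct and follows exactly the paper's own argument: identify $S(A|B)$ with the perspective $\Pi_{\log t}(A,B)$ and apply Theorem \ref{pers-cnx} together with the operator concavity of $\log$. The extra remarks on justifying the concavity of $\log$ and on the domain $(0,\infty)$ are sound but not a departure from the paper's route.
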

\begin{proof}
The relative operator entropy $S(A|B)$ is the perspective of $\log t$ in the sense of our definition
and so Theorem \ref{pers-cnx} and the operator concavity of $\log t$ imply the result.
\end{proof}

Effros gave a new interesting proof for Lieb and Ruskai's result \cite{Lieb-Ruskai} (see Corollary 2.1 of \cite{Effros})
and now we provide simple proofs for the same results.
%
\begin{thm}
(i) The (quantum) relative entropy
$$
(\rho,\sigma) \mapsto H (\rho \| \sigma) = Trace\ \rho \log \rho - \rho \log \sigma
$$
is jointly convex on the commutative strictly positive operators $\rho,\sigma$.
\\
(ii) Part (i) holds for the noncommutative strictly positive operators $\rho,\sigma$.
\end{thm}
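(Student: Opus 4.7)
The plan is to reduce both parts to the joint concavity/convexity results already established for perspectives of $\log$ (Theorems \ref{pers-cnx} and \ref{Fujii}), bridging from the operator-valued inequality to the scalar-valued one by taking the trace. For part (i), when $\rho\sigma=\sigma\rho$, the Fujii--Kamei relative operator entropy simplifies: $\rho^{-1/2}\sigma\rho^{-1/2}=\rho^{-1}\sigma$ and $\log(\rho^{-1}\sigma)=\log\sigma-\log\rho$, so $S(\rho|\sigma)=\rho(\log\sigma-\log\rho)$ and hence $\mathrm{Tr}(S(\rho|\sigma))=-H(\rho\|\sigma)$. Since trace is a positive linear functional it preserves operator inequalities; combining this with the joint concavity of $S$ from Theorem \ref{Fujii} yields that $-\mathrm{Tr}(S(\rho|\sigma))=H(\rho\|\sigma)$ is jointly convex on commuting strictly positive pairs.

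For part (ii), I follow Effros's device of passing to commuting left/right multiplications. On the Hilbert--Schmidt space $\mathrm{HS}(\mathcal H)$, define $L_\rho(X):=\rho X$ and $R_\sigma(X):=X\sigma$; these are strictly positive, mutually commuting operators satisfying $\log L_\rho=L_{\log\rho}$ and $\log R_\sigma=R_{\log\sigma}$. The commutativity collapses the perspective:
\[
\Pi_{-\log}(L_\rho,R_\sigma)=L_\rho^{1/2}\bigl(-\log(L_\rho^{-1/2}R_\sigma L_\rho^{-1/2})\bigr)L_\rho^{1/2}=L_\rho(\log L_\rho-\log R_\sigma).
\]
Applying this to the identity $I$ (viewed as a vector in $\mathrm{HS}(\mathcal H)$) gives the element $\rho\log\rho-\rho\log\sigma$, and pairing with $I$ under the Hilbert--Schmidt inner product yields
\[
\langle I,\Pi_{-\log}(L_\rho,R_\sigma)I\rangle_{\mathrm{HS}}=\mathrm{Tr}(\rho\log\rho-\rho\log\sigma)=H(\rho\|\sigma).
\]
Since $-\log$ is operator convex (as $\log$ is operator concave), Theorem \ref{pers-cnx} supplies joint operator convexity of $\Pi_{-\log}$. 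The linearities $\rho\mapsto L_\rho$, $\sigma\mapsto R_\sigma$ together with pairing against the fixed test vector $I$ then transfer this to joint convexity of the scalar function $H(\rho\|\sigma)$.

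The main obstacle is the identity $H(\rho\|\sigma)=\langle I,\Pi_{-\log}(L_\rho,R_\sigma)I\rangle_{\mathrm{HS}}$, whose verification relies on $[L_\rho,R_\sigma]=0$ to replace the noncommutative functional calculus inside $\Pi_{-\log}$ by the joint commuting one, and on carefully unwinding the action of $L_\rho$ and $R_\sigma$ on $I$. In infinite dimensions $I$ is not Hilbert--Schmidt, so the pairing must be reinterpreted via an orthonormal basis as $\sum_i\langle e_i,(\rho\log\rho-\rho\log\sigma)e_i\rangle$ under suitable trace-class assumptions; but since this pairing is linear in the middle operator, the joint-convexity conclusion is unaffected.
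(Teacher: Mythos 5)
Your proof is correct and follows essentially the same route as the paper: part (i) via $H(\rho\|\sigma)=-\mathrm{Trace}\,S(\rho|\sigma)$ together with the joint concavity of the Fujii--Kamei entropy, and part (ii) via Effros's device of pairing $-S(L_\rho|R_\sigma)(I)=\Pi_{-\log}(L_\rho,R_\sigma)(I)$ with $I$ in the Hilbert--Schmidt inner product. Your added remark on the domain issue when $I$ is not Hilbert--Schmidt in infinite dimensions is a point the paper leaves implicit, but the argument is the same.
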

%
\begin{proof}
The following equalities show that parts (i) and (ii) are a simple application of Theorem \ref{Fujii}.
\\
(i) We have $H(\rho \| \sigma)=-Trace\ S(\rho |\sigma)$.
\\
(ii) For the commuting operators $L_{\rho}$ and $R_{\sigma}$ we have
$$
\langle -S(L_{\rho}|R_{\sigma})(I),I\rangle=Trace\ \rho\log\rho-\rho\log\sigma,
$$
where $\langle\cdot,\cdot\rangle$ is the Hilbert-Schmidt inner product,
$L_{\rho}$ is the Left multiplication
by $\rho$ and $R_{\sigma}$ is the right multiplication by $\sigma$.
\end{proof}

%

Furuta \cite{Furuta} defined the generalized relative operator entropy for the strictly positive operators $A,B$ and $q\in\Bbb{R}$ by
$$
S_q(A|B)=A^{1/2}(A^{-1/2}BA^{-1/2})^q(\log A^{-1/2}BA^{-1/2})A^{1/2}.
$$
Using the notion of the generalized relative operator entropy, Furuta obtained the parametric extension of
the operator Shannon inequality and its reverse one.
Note that for $q=0$, we get the relative operator entropy between $A$ and $B$, i.e.,
$$
S_0(A|B)=S(A|B).
$$

A natural question now arises: What can we say about the joint convexity or concavity of the generalized relative operator entropy?
We will find a function $f$ such that $S_q(A|B)=\Pi_{f}(A,B)$.
We discuss this in the next section.

\section{Generalized transpose operator functions and its applications}

A motivation to write this section is to prove the joint convexity of the generalized relative operator entropy
introduced by Furuta \cite{Furuta}. We also give a parametric extension of this notion, namely, we introduce
the notion of a relative operator $(\alpha, \beta)$-entropy  and prove that $S_{\alpha, \beta}$ is jointly convex for $\alpha, \beta\in[0,1]$.

\begin{df}
Let $f$ and $h$ be continuous functions and $h>0$.
We define a generalized transpose function with respect to the functions $f$ and $h$ by
$$
f_{h}^{*}(t):=h(t)f(\frac{1}{h(t)}).
$$
In particular, the transpose function with respect to the function $f$ is defined by $f^{*}(t):=tf(t^{-1})$.
\end{df}

The following result is a straight forward consequence of Theorem \ref{pers-cnx}.
Indeed, we have $f^*(A)=Af(A^{-1})=A^{1/2}f(A^{-1/2}A^{-1/2})A^{1/2}=\Pi_{f}(A,1)$.
\begin{thm}\label{thmf*}
Suppose that $f$ is a continuous function. Then,
$f$ is operator convex (concave) if and only if so is $f^{*}$.
\end{thm}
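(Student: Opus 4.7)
The plan is to leverage the identity highlighted just before the statement, namely
\[
f^*(A) = A f(A^{-1}) = A^{1/2} f(A^{-1/2} I A^{-1/2}) A^{1/2} = \Pi_f(A, I),
\]
valid on $B(\mathcal{H})^{++}$ by functional calculus (since $A^{1/2}$ commutes with $f(A^{-1})$). Combined with Theorem \ref{pers-cnx}, which already equates operator convexity/concavity of $f$ with joint convexity/concavity of $\Pi_f$, this should give the equivalence almost for free; the converse direction will follow from the involution property $(f^*)^* = f$.

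For the forward implication, suppose $f$ is operator convex. By Theorem \ref{pers-cnx}, $\Pi_f$ is jointly convex. Restricting the second argument to the identity, for strictly positive $A_1, A_2$ and $c \in [0,1]$ I would simply write
\begin{align*}
f^*(cA_1 + (1-c)A_2) &= \Pi_f\bigl(cA_1 + (1-c)A_2,\ cI + (1-c)I\bigr) \\
&\leq c\, \Pi_f(A_1, I) + (1-c)\, \Pi_f(A_2, I) \\
&= c f^*(A_1) + (1-c) f^*(A_2),
\end{align*}
so $f^*$ is operator convex.

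For the converse, a direct calculation gives $(f^*)^*(t) = t \cdot f^*(1/t) = t \cdot (1/t) f(t) = f(t)$, so $(f^*)^* = f$ on $(0,\infty)$. Applying the forward implication just established with $f^*$ in place of $f$, operator convexity of $f^*$ yields operator convexity of $f = (f^*)^*$. The operator concave case follows by replacing $f$ with $-f$ and noting $(-f)^* = -f^*$. The main (rather mild) obstacle is just checking that the perspective identity $\Pi_f(A, I) = f^*(A)$ holds as written on strictly positive $A$; once that is recorded, Theorem \ref{pers-cnx} does all the real work.
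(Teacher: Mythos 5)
Your proposal is correct and follows essentially the same route as the paper, which simply observes $f^*(A)=Af(A^{-1})=A^{1/2}f(A^{-1/2}A^{-1/2})A^{1/2}=\Pi_f(A,1)$ and invokes Theorem \ref{pers-cnx}; your use of the involution $(f^*)^*=f$ is the natural (and needed) way to complete the converse direction that the paper leaves implicit.
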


The trace operation plays a central role in quantum statistical mechanics.
The mapping $A\mapsto Trace Kf(A)$ is certainly
convex when $K>0$ and  $f$ is operator convex.

%
\begin{cor}
The von Neumann entropy $S(\rho)=-Trace \rho\log \rho$ is operator concave on the strictly positive operator $\rho$.
\end{cor}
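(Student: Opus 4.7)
The plan is to recognize the integrand $-\rho\log\rho$ of the von Neumann entropy as the transpose (in the sense of the definition just given) of the logarithm, and to transfer concavity via Theorem \ref{thmf*}. First I would compute
$$
(\log)^{*}(t) \;=\; t\log(t^{-1}) \;=\; -t\log t,
$$
so the operator function $\rho\mapsto -\rho\log\rho$ is exactly $f^{*}(\rho)$ for $f=\log$.

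Second, I would invoke the classical fact (standard L\"owner theory, used implicitly throughout the paper, e.g.\ in the treatment of the Fujii--Kamei relative operator entropy) that $\log$ is operator concave on $(0,\infty)$. Applying Theorem \ref{thmf*} to $f=\log$ then immediately gives operator concavity of $f^{*}(t)=-t\log t$, i.e.\ the operator-valued map $\rho\mapsto -\rho\log\rho$ is operator concave on $B({\mathcal H})^{++}$.

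Finally, I would pass to the scalar functional by taking the trace. Since $X\geq Y$ for self-adjoint $X,Y$ implies $\text{Tr}\,X\geq\text{Tr}\,Y$, operator concavity of any continuous function descends to ordinary concavity of its trace; applied to $g(\rho)=-\rho\log\rho$ this yields concavity of $S(\rho)=\text{Tr}\,g(\rho)$ on the strictly positive operators.

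There is essentially no obstacle here beyond the bookkeeping, which is why the result is stated as a corollary of Theorem \ref{thmf*}. The only minor point of care is a mild abuse of terminology in the statement: $S$ is a scalar functional, not an operator function, so ``operator concave'' should be read as ``concave'' (equivalently, as operator concavity of the integrand $-\rho\log\rho$ followed by a trace). Either reading drops out of the same one-line application of Theorem \ref{thmf*}.
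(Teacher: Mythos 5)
Your proposal is correct and follows exactly the paper's route: identify $-t\log t$ as the transpose $f^{*}$ of the operator concave function $f(t)=\log t$, apply Theorem \ref{thmf*}, and pass to the trace. The only difference is that you spell out the trace step and the terminological caveat explicitly, which the paper leaves implicit.
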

\begin{proof}
For the operator concave function $f(t)=\log t$ we have $f^{*}(t)=-t\log t$.
Using Theorem \ref{thmf*}, we deduce the function $-t\log t$ is operator concave and hence we obtain the desired result.
\end{proof}

\begin{thm}\label{thm1}
Suppose that $f$ and $h$ are continuous functions and $h>0$.
\begin{itemize}
  \item[(i)]
If $f$ is operator convex with $f(0)\leq 0$ and $h$ is operator concave,
then $f^{*}_{h}$ is operator convex.
  \item[(ii)]
If $f$ and $h$ are operator concave with $f(0)\geq 0$, then $f^{*}_{h}$ is operator concave.
 \end{itemize}
\end{thm}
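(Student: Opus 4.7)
The plan is to recognize $f_h^*$ as a one-variable specialization of the generalized perspective $\Pi_{f\Delta h}$ already studied in Theorem \ref{nikoufar-Ph.D.}, and then read off operator convexity/concavity of $f_h^*$ from the joint convexity/concavity of $\Pi_{f\Delta h}$.

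First I would verify the identity
\[
f_h^*(A)=\Pi_{f\Delta h}(A,I)
\]
for every $A\in B(\mathcal H)^{++}$. This is a direct computation: by the very definition of the generalized perspective,
\[
\Pi_{f\Delta h}(A,I)=h(A)^{1/2}f\bigl(h(A)^{-1/2}\,I\,h(A)^{-1/2}\bigr)h(A)^{1/2}=h(A)^{1/2}f(h(A)^{-1})h(A)^{1/2},
\]
and since $h(A)^{1/2}$ commutes with $f(h(A)^{-1})$ by the functional calculus, this collapses to $h(A)f(h(A)^{-1})$, which is precisely $f_h^*(A)$.

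For part (i), the hypotheses — $f$ operator convex with $f(0)\leq 0$ and $h$ operator concave — are exactly those of Theorem \ref{nikoufar-Ph.D.} (i), so $\Pi_{f\Delta h}$ is jointly convex. Now for $A_1,A_2\in B(\mathcal H)^{++}$ and $c\in[0,1]$, joint convexity applied along the affine path $(cA_1+(1-c)A_2, cI+(1-c)I)=(cA_1+(1-c)A_2,I)$ yields
\[
f_h^*(cA_1+(1-c)A_2)=\Pi_{f\Delta h}(cA_1+(1-c)A_2,I)\leq c\,\Pi_{f\Delta h}(A_1,I)+(1-c)\Pi_{f\Delta h}(A_2,I),
\]
which is $cf_h^*(A_1)+(1-c)f_h^*(A_2)$. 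Hence $f_h^*$ is operator convex. Part (ii) proceeds identically: the hypotheses match Theorem \ref{nikoufar-Ph.D.} (ii), so $\Pi_{f\Delta h}$ is jointly concave, and the same specialization $B=I$ with reversed inequality gives operator concavity of $f_h^*$.

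There is no real obstacle here — the entire argument is a corollary of the previously established theorem once the identification $f_h^*(\cdot)=\Pi_{f\Delta h}(\cdot,I)$ is noticed. The one minor point worth flagging in the write-up is that partial evaluation of a jointly convex (respectively concave) function of two operator variables at a fixed second argument yields a convex (respectively concave) function of the first argument, which follows immediately from the affineness of the inclusion $A\mapsto(A,I)$.
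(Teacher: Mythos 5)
Your proposal is correct and follows exactly the paper's own argument: identify $f_h^*(A)=\Pi_{f\Delta h}(A,I)$, invoke Theorem \ref{nikoufar-Ph.D.} (i) (respectively (ii)) for joint convexity (respectively concavity) of the generalized perspective, and specialize the second argument to the identity. Your extra verification of the identity and the remark about restriction along the affine map $A\mapsto(A,I)$ are just the details the paper leaves implicit.
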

\begin{proof}
(i) Let $f$ be operator convex and $h$ operator concave. Then, it follows from Theorem \ref{nikoufar-Ph.D.} (i) that $\Pi_{f\Delta h}$ is jointly convex.
The fact that a jointly convex function is convex in each of its arguments separately and
$\Pi_{f\Delta h}(A,1)=f^{*}_{h}(A)$ imply  $f^{*}_{h}$ is operator convex.

(ii) The result comes from Theorem \ref{nikoufar-Ph.D.} (ii).
\end{proof}

The proof of the following lemma is straightforward.
%
\begin{lem}
The function $f$ is operator convex (concave) if and only if so is $f_{\varepsilon}$ for every $\varepsilon>0$, where
$f_{\varepsilon}(t):=f(t+\varepsilon)$.
\end{lem}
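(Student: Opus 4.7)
The plan is based on the elementary observation that the affine map $X \mapsto X + \varepsilon I$ preserves convex combinations of self-adjoint operators and commutes with the functional calculus in the obvious way. The key identity that drives the entire argument is
$$
(cA_1 + (1-c)A_2) + \varepsilon I = c(A_1 + \varepsilon I) + (1-c)(A_2 + \varepsilon I),
$$
valid for all self-adjoint $A_1, A_2$ and $c \in [0,1]$, which is immediate because $\varepsilon I = c\,\varepsilon I + (1-c)\,\varepsilon I$.

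For the forward direction, assume $f$ is operator convex. For self-adjoint $A_1, A_2$ in the relevant domain and $c \in [0,1]$, I would simply compute
$$
f_\varepsilon\bigl(cA_1 + (1-c)A_2\bigr) = f\bigl(c(A_1+\varepsilon I) + (1-c)(A_2+\varepsilon I)\bigr) \leq c\,f(A_1+\varepsilon I) + (1-c)\,f(A_2+\varepsilon I),
$$
where the equality uses the definition of $f_\varepsilon$ together with the displayed identity, and the inequality uses operator convexity of $f$ applied to the shifted operators. The right-hand side is $c\,f_\varepsilon(A_1) + (1-c)\,f_\varepsilon(A_2)$, which gives operator convexity of $f_\varepsilon$.

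For the reverse direction, assume $f_\varepsilon$ is operator convex for some fixed $\varepsilon > 0$ (a single $\varepsilon$ suffices, so the ``for every $\varepsilon > 0$'' in the statement is stronger than needed). Given self-adjoint $B_1, B_2$ in the domain of $f$, set $A_i := B_i - \varepsilon I$; these lie in the natural domain of $f_\varepsilon$, and running the same chain of equalities in reverse produces the operator convex inequality for $f$. The operator concave case follows by applying the already-proved equivalence to $-f$. There is essentially no obstacle here: the statement is a routine affine change of variables, and the main content is just recording the identity above and unpacking the definition of $f_\varepsilon$.
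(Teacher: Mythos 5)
Your argument is correct and is precisely the routine affine-substitution proof the paper has in mind (it states the lemma with no proof, calling it straightforward): the identity $cA_1+(1-c)A_2+\varepsilon I=c(A_1+\varepsilon I)+(1-c)(A_2+\varepsilon I)$ does all the work in both directions. The only caveat is your parenthetical claim that a single $\varepsilon$ suffices for the converse: under the natural convention that $f$ and $f_\varepsilon$ are both considered on $[0,\infty)$ (as in the paper's application to $f_\varepsilon(t)=\log(t+\varepsilon)$), the reverse substitution $B_i\mapsto B_i-\varepsilon I$ can exit the domain when the spectrum of $B_i$ dips below $\varepsilon$, so convexity of a single $f_\varepsilon$ only yields convexity of $f$ on $[\varepsilon,\infty)$, and the quantifier ``for every $\varepsilon>0$'' in the statement is genuinely used to cover all of $(0,\infty)$.
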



Let $f$ be a twice differentiable function on $[0,\infty)$.
Define $k(t):=t^qf(t)$ for $0\leq q\leq 1$ and consider
$$
I_q:=\{t\geq 0: k''(t)\geq 0\}.
$$
Clearly, $k$ is not convex on $\Bbb{R}^+-I_q$ and hence is not operator convex on outside of $I_q$.
We show that under some assumptions $k$ is operator convex on $I_q$.

%
%
\begin{lem}\label{tqlogt01}
If $f$ is operator monotone on $[0,\infty)$ such that $f(0)\leq 0$ and $\lim_{t\to\infty}\frac{f(t)}{t}=0$,
then the function $k$ is operator convex on $I_q$.
\end{lem}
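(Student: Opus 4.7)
The plan is to exploit the L\"owner integral representation of operator monotone functions on $[0,\infty)$ and reduce the problem to checking operator convexity of a one-parameter family of elementary functions. Since $f$ is operator monotone on $[0,\infty)$ with $f(0)\le 0$ and $\lim_{t\to\infty}f(t)/t=0$, there exists a positive Radon measure $\mu$ on $(0,\infty)$ such that
$$
f(t)=f(0)+\int_0^\infty\frac{\lambda t}{\lambda+t}\,d\mu(\lambda).
$$
Multiplying by $t^q$ gives the decomposition
$$
k(t)=f(0)\,t^q+\int_0^\infty\frac{\lambda\,t^{q+1}}{\lambda+t}\,d\mu(\lambda).
$$
Because $q\in[0,1]$ makes $t^q$ operator concave and $f(0)\le 0$, the first summand $f(0)t^q$ is operator convex on all of $[0,\infty)$, so the problem reduces to analyzing the integrand $\psi_\lambda(t):=\lambda t^{q+1}/(\lambda+t)$ for each $\lambda>0$.

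Next, I would recognize $\psi_\lambda$ as a generalized perspective in disguise. Writing $h(t)=t^q$ (operator concave and strictly positive for $q\in[0,1]$) and choosing $g$ so that $\psi_\lambda(A)$ matches $\Pi_{g\Delta h}(A,A^{q+1})$ in the commutative case, the goal becomes to invoke Theorem~\ref{nikoufar-Ph.D.}(i) (for a $g$ operator convex with $g(0)\le 0$), so that $\Pi_{g\Delta h}$ is jointly convex, and then push this through the diagonal-type composition $A\mapsto(A,A^{q+1})$. For $q=1$ this directly works: $\psi_\lambda(t)=\lambda t^2/(\lambda+t)$ is the transpose of $\lambda/(1+\lambda t)$, and the latter is operator convex, so Theorem~\ref{thmf*} yields operator convexity of $\psi_\lambda$ on all of $[0,\infty)$.

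The main obstacle is that for $q\in(0,1)$ the function $\psi_\lambda$ is \emph{not} globally operator convex on $[0,\infty)$ --- a simple curvature computation already shows $\psi_\lambda''$ changes sign. This is precisely why the restriction to $I_q=\{t\ge 0:k''(t)\ge 0\}$ is built into the statement. The hard part of the argument, then, is to upgrade the scalar inequality $k''(t)\ge 0$ on $I_q$ to an operator convexity assertion; I would do this by combining the analytic (Pick-function) structure inherited from $f$ being operator monotone with the Hansen--Pedersen--Jensen inequality \cite{HansenPeder}, which is also the engine behind Theorem~\ref{nikoufar-Ph.D.}. Once $\psi_\lambda$ is shown to be operator convex on $I_q$ for each $\lambda>0$, integration against the positive measure $\mu$ preserves operator convexity, and combining with the operator convexity of $f(0)t^q$ completes the proof.
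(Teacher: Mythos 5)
Your setup --- the L\"owner representation $f(t)=f(0)+\beta t+\int_0^\infty\frac{\lambda t}{\lambda+t}\,d\mu(\lambda)$, the observation that $\lim_{t\to\infty}f(t)/t=0$ forces $\beta=0$, the resulting decomposition of $k$ into $f(0)t^q$ plus an integral of the functions $\psi_\lambda(t)=\lambda t^{1+q}/(\lambda+t)$, and the operator convexity of the term $f(0)t^q$ --- coincides exactly with the first half of the paper's proof. But the second half, which is the entire substance of the lemma, is missing from your proposal. You establish operator convexity of $\psi_\lambda$ only for $q=1$; for $q\in(0,1)$ you say the hard part is ``to upgrade the scalar inequality $k''(t)\ge 0$ on $I_q$ to an operator convexity assertion'' and that you ``would do this by combining the Pick-function structure with the Hansen--Pedersen--Jensen inequality.'' That is a declaration of intent, not an argument, and the strategy as stated cannot succeed: pointwise convexity of a function on an interval does not imply operator convexity on that interval, so there is no general mechanism that ``upgrades'' the condition $k''\ge 0$ defining $I_q$. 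Your suggested detour through $\Pi_{g\Delta h}(A,A^{q+1})$ is also unsound, because joint convexity of $\Pi_{g\Delta h}$ in the pair $(A,B)$ gives no information about convexity along the non-affine substitution $B=A^{q+1}$.

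For comparison, the paper's proof attacks each integrand directly: with $g(t)=t^q$ it shows, separately for $\lambda>1$ and $0<\lambda<1$, that $h(t)=\frac{t+1/\lambda}{t}g(t)$ (resp.\ $h_1(t)=\frac{t+\lambda}{t}g(t)$) is operator monotone via \cite[Corollary V.3.12]{Bhatia}, and then identifies $\frac{\lambda t^{1+q}}{\lambda+t}$ with the transpose-type function $th(t^{-1})^{-1}$ (resp.\ with $\lambda^{2q}ts(t/\lambda)$ where $s(t)=h_1(\lambda t^{-1})^{-1}$), whose operator convexity follows from \cite[Theorem 2.4]{HansenPeder} and \cite[Problem V.5.7]{Bhatia}. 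None of this machinery, or any substitute for it, appears in your proposal, so the key step of the lemma is a genuine gap.
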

%
%
\begin{proof}
The operator monotone function $f$ on $[0,\infty)$ can be represented as
$$
f(t)=f(0)+\beta t+\int_{0}^{\infty}\frac{\lambda t}{\lambda +t}d\mu(\lambda),
$$
where $\beta\geq 0$ and $\mu$ is a positive measure on $[0,\infty)$; see \cite[Chapter V]{Bhatia}.
Since $\lim_{t\to\infty}\frac{f(t)}{t}=0$, $\beta=0$. So by multiplying both sides to $t^q$ we have
$$
t^qf(t)=f(0)t^q+\int_{0}^{\infty}\frac{\lambda t^{1+q}}{\lambda +t}d\mu(\lambda).
$$
The function $f(0)t^q$ is operator convex.
Indeed, it is sufficient to prove that the function $\frac{\lambda t^{1+q}}{\lambda +t}$ is operator convex. Define $g(t)=t^q$
and consider two cases:
(i) For $\lambda>1$ the function $h(t)=\frac{t+1/\lambda}{t}g(t)$
is operator monotone by \cite[Corollary V.3.12]{Bhatia}.
So \cite[Theorem 2.4]{HansenPeder} and \cite[Problem V.5.7]{Bhatia} show that the function $th(t^{-1})^{-1}$ is operator convex.
(ii) For $0<\lambda<1$ the function $h_1(t)=\frac{t+\lambda}{t}g(t)$ is operator monotone by \cite[Corollary V.3.12]{Bhatia}.
So \cite[Problem V.5.7]{Bhatia} entails that the function $s(t)=h_1(\lambda t^{-1})^{-1}$ is operator monotone.
This implies the function $s(\frac{t}{\lambda})$ is also operator monotone.
Therefore, the function $\lambda^{2q}ts(\frac{t}{\lambda})$ is operator convex by \cite[Theorem 2.4]{HansenPeder}.
In each of the cases a simple calculation shows that the function $th(t^{-1})^{-1}$ in the case (i) and
the function $\lambda^{2q}ts(\frac{t}{\lambda})$ in the case (ii) are equal to the function $\frac{\lambda t^{1+q}}{\lambda +t}$.
\end{proof}

%
%


\begin{lem}\label{tqlogt12}
The function $k(t)=t^q\log t$ is operator convex on $J_q:=[0,e^{\frac{2q-1}{q(1-q)}}]$ for $0\leq q\leq 1$.
\end{lem}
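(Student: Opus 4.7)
The plan is to apply Lemma \ref{tqlogt01} with $f(t)=\log t$ after identifying the set $I_q$ with $J_q$ by a direct calculus computation.

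First I would differentiate $k(t)=t^q\log t$ twice and obtain
$k''(t)=t^{q-2}\bigl[q(q-1)\log t+(2q-1)\bigr]$.
For $t>0$ and $q\in(0,1)$ the factor $t^{q-2}$ is positive and $q(q-1)<0$, so $k''(t)\geq 0$ reduces to $\log t\leq \tfrac{2q-1}{q(1-q)}$, i.e.\ $t\leq e^{(2q-1)/(q(1-q))}$. This shows that the set $I_q=\{t\geq 0:k''(t)\geq 0\}$ appearing in the hypothesis of the preceding lemma coincides exactly with $J_q$ (with the endpoint cases $q=0$ and $q=1$ handled separately, where $t\log t$ is known to be operator convex on $[0,\infty)$ and $\log t$ operator concave).

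Next I want to invoke Lemma \ref{tqlogt01} with $f(t)=\log t$, which is operator monotone on $(0,\infty)$ and satisfies $\lim_{t\to\infty}\log t/t=0$. The technical snag is the hypothesis $f(0)\leq 0$: $\log t$ is not defined at the origin. I would get around this by regularizing: take $f_{\varepsilon}(t):=\log(t+\varepsilon)$ with $0<\varepsilon\leq 1$. Then $f_\varepsilon$ is operator monotone on $[0,\infty)$, $f_\varepsilon(0)=\log\varepsilon\leq 0$, and $\lim_{t\to\infty}f_\varepsilon(t)/t=0$, so Lemma \ref{tqlogt01} yields operator convexity of $k_{\varepsilon}(t)=t^q\log(t+\varepsilon)$ on $I_q^\varepsilon:=\{t\geq 0:k_\varepsilon''(t)\geq 0\}$. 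A direct calculation shows $k_\varepsilon''\to k''$ uniformly on compact subsets of $(0,\infty)$, so the right endpoint of $I_q^\varepsilon$ converges to that of $J_q$ and in particular $I_q^\varepsilon$ contains every compact subinterval of the interior $J_q^\circ$ for $\varepsilon$ sufficiently small. Passing to the limit $\varepsilon\to 0^+$ in the operator convexity inequality (which is stable under pointwise limits) gives operator convexity of $k$ on $J_q^\circ$, and continuity extends the inequality to the boundary.

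The main obstacle is precisely this regularization step: one must justify that operator convexity on the approximating intervals $I_q^\varepsilon$ transfers to the target interval $J_q$. An alternative, and perhaps more satisfying, route would be to imitate the proof of Lemma \ref{tqlogt01} by inserting the integral representation $\log t=\int_0^\infty\bigl(\tfrac{1}{1+s}-\tfrac{1}{s+t}\bigr)\,ds$ in place of $f(0)+\int \tfrac{\lambda t}{\lambda+t}d\mu$, and then analyzing the operator convexity of the resulting kernel $\tfrac{t^q}{1+s}-\tfrac{t^q}{s+t}$ restricted to $J_q$. This avoids the regularization but requires the same Pick-function arguments (\cite[Corollary V.3.12]{Bhatia} and \cite[Problem V.5.7]{Bhatia}) as in Lemma \ref{tqlogt01}, so I would favour the regularization approach for brevity.
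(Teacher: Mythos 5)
Your proposal is correct and follows essentially the same route as the paper: regularize with $f_{\varepsilon}(t)=\log(t+\varepsilon)$ so that Lemma \ref{tqlogt01} applies, obtain operator convexity of $k_{\varepsilon}(t)=t^q\log(t+\varepsilon)$ on the corresponding interval, and let $\varepsilon\to 0^+$. Your explicit computation of $k''$ identifying $I_q$ with $J_q$, and your care about how the approximating intervals exhaust $J_q$ and how the inequality passes to the limit, are welcome details that the paper's own (terser) proof leaves implicit.
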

%
%
\begin{proof}
Let $\varepsilon\in(0,1)$. Then, the function $f_{\varepsilon}(t)=\log(t+\varepsilon)$ satisfies in the assumptions of Lemma \ref{tqlogt01} and so
$k_{\varepsilon}(t)=t^q\log(t+\varepsilon)$ is operator convex on an interval $J_{q,\varepsilon}\subseteq J_q$.
Hence, when $\varepsilon\to 0$ we see that the function $k$ is operator convex on $J_q$.
\end{proof}

We now prove the main result of this section and its generalization.
\begin{cor}
The generalized relative operator entropy $S_q(A|B)$ is jointly convex on the strictly positive operators $A,B$ with spectra in $J_q$ and $0\leq q\leq 1$.
\end{cor}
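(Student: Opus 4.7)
The plan is to recognize $S_q(A|B)$ as the noncommutative perspective $\Pi_f(A,B)$ associated with the single-variable function $f(t) := t^q \log t$, and then to invoke Theorem \ref{pers-cnx} together with the operator convexity supplied by Lemma \ref{tqlogt12}. Indeed, with this choice of $f$, functional calculus gives
\[
f(A^{-1/2}BA^{-1/2}) = (A^{-1/2}BA^{-1/2})^q \log(A^{-1/2}BA^{-1/2}),
\]
so that
\[
\Pi_f(A,B) = A^{1/2} f(A^{-1/2}BA^{-1/2}) A^{1/2} = S_q(A|B),
\]
which matches Furuta's definition verbatim. This identification is the whole conceptual content of the argument; from here the work is already done in the earlier results.

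The next step is to note that by Lemma \ref{tqlogt12}, the function $f(t)=t^q\log t$ is operator convex on $J_q = [0, e^{(2q-1)/(q(1-q))}]$ for $0\le q\le 1$. I would then apply Theorem \ref{pers-cnx}, which asserts that $f$ operator convex implies $\Pi_f$ jointly convex. For strictly positive $A_1,A_2,B_1,B_2$ and $c\in[0,1]$, setting $A = cA_1+(1-c)A_2$ and $B = cB_1+(1-c)B_2$, one obtains
\[
S_q(A|B) = \Pi_f(A,B) \le c\,\Pi_f(A_1,B_1) + (1-c)\,\Pi_f(A_2,B_2) = c\,S_q(A_1|B_1)+(1-c)\,S_q(A_2|B_2),
\]
which is exactly the joint convexity claim.

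The main subtlety, and what I would treat as the key obstacle, is the domain issue: Lemma \ref{tqlogt12} yields operator convexity of $f$ only on $J_q$, so one must ensure that the arguments $A_i^{-1/2}B_i A_i^{-1/2}$ (and the convex combination's analogue) take values whose spectra lie in $J_q$. The hypothesis ``spectra in $J_q$'' on $A$ and $B$ has to be interpreted in the stronger sense that guarantees this: for instance, by requiring that the spectrum of $A^{-1/2}BA^{-1/2}$ itself is contained in $J_q$ for each pair in question, which is the condition actually needed for the application of Theorem \ref{pers-cnx} restricted to operators with spectrum in $J_q$. Once that compatibility is in place, no further computation is necessary, and the joint convexity follows immediately from the two results cited.
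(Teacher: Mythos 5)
Your proposal is correct and follows exactly the paper's own route: identify $S_q(A|B)=\Pi_f(A,B)$ for $f(t)=t^q\log t$, then combine Lemma \ref{tqlogt12} with Theorem \ref{pers-cnx}. Your additional remark that the hypothesis ``spectra in $J_q$'' must really be read as a condition guaranteeing that $A^{-1/2}BA^{-1/2}$ has spectrum in $J_q$ is a fair point the paper leaves implicit, but it does not change the argument.
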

\begin{proof}
The generalized relative operator entropy $S_q(A|B)$ is the perspective of the operator convex function $t^q\log t$, $t>0$ and so Lemma \ref{tqlogt12} and Theorem \ref{pers-cnx}
ensure that $S_q(A|B)$ is jointly convex.
\end{proof}

We introduce a relative operator $(\alpha,\beta)$-entropy (two parameters relative operator entropy) as follows:
$$
S_{\alpha,\beta}(A|B)=A^{\frac{\beta}{2}}(A^{-\frac{\beta}{2}}BA^{-\frac{\beta}{2}})^{\alpha}(\log A^{-\frac{\beta}{2}}BA^{-\frac{\beta}{2}})A^{\frac{\beta}{2}}
$$
for the strictly positive operators $A,B$ and the real numbers $\alpha, \beta$.
We consider its convexity or concavity properties.
In particular, we have
$S_{q,1}(A|B)=S_{q}(A|B)$ and $S_{0,1}(A|B)=S(A|B)$.

\begin{thm}\label{S-alpha}
The relative operator $(\alpha,\beta)$-entropy $S_{\alpha,\beta}(A|B)$ is jointly convex
on the strictly positive operators $A,B$ with spectra in $J_{\alpha}$ and $0\leq \alpha, \beta\leq 1$.
\end{thm}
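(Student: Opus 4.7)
The strategy is to identify $S_{\alpha,\beta}(A|B)$ as a generalized perspective and invoke Theorem \ref{nikoufar-Ph.D.} (i). Taking $f(t)=t^{\alpha}\log t$ and $h(t)=t^{\beta}$, so that $h(A)^{1/2}=A^{\beta/2}$, a direct substitution gives
$$
\Pi_{f\Delta h}(A,B)=A^{\beta/2}f(A^{-\beta/2}BA^{-\beta/2})A^{\beta/2}=A^{\beta/2}(A^{-\beta/2}BA^{-\beta/2})^{\alpha}(\log A^{-\beta/2}BA^{-\beta/2})A^{\beta/2}=S_{\alpha,\beta}(A|B).
$$
Thus the joint convexity of $S_{\alpha,\beta}$ reduces to that of $\Pi_{f\Delta h}$.

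Next I would verify the three hypotheses of Theorem \ref{nikoufar-Ph.D.} (i). The function $h(t)=t^{\beta}$ is strictly positive on $(0,\infty)$ and operator concave on $[0,\infty)$ for $0\le \beta\le 1$ by the L\"owner--Heinz inequality. The function $f(t)=t^{\alpha}\log t$ is operator convex on $J_{\alpha}$ for $0\le \alpha\le 1$ by Lemma \ref{tqlogt12}, and $f(0)=\lim_{t\to 0^+}t^{\alpha}\log t=0\le 0$ when $\alpha>0$ (the $\alpha=0$ case is degenerate, since $J_0=\{0\}$ leaves no room for strictly positive spectra). With these checks in hand, Theorem \ref{nikoufar-Ph.D.} (i) delivers the joint convexity of $\Pi_{f\Delta h}$, and therefore of $S_{\alpha,\beta}$.

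The step I expect to require the most care is aligning the spectral restriction. Lemma \ref{tqlogt12} furnishes operator convexity of $f$ only on the interval $J_{\alpha}$, whereas Theorem \ref{nikoufar-Ph.D.} (i) is stated with an unqualified operator-convex hypothesis. To bridge this gap I would re-examine the proof of Theorem \ref{nikoufar-Ph.D.} (i) and observe that the operator convexity of $f$ is invoked, through the Hansen--Pedersen--Jensen inequality, only at the sandwich operators $X_i:=A_i^{-\beta/2}B_iA_i^{-\beta/2}$ and at the contractive combination $T_1^*X_1T_1+T_2^*X_2T_2$. The assumption that $A$ and $B$ have spectra in $J_{\alpha}$ is precisely what ensures these arguments remain within the interval where $f$ is operator convex, so the Hansen--Pedersen--Jensen step, and hence the whole perspective argument, applies unchanged in the restricted setting.
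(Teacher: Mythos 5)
Your proposal takes exactly the route the paper does: identify $S_{\alpha,\beta}(A|B)=\Pi_{f\Delta h}(A,B)$ with $f(t)=t^{\alpha}\log t$ and $h(t)=t^{\beta}$, note $f(0)=0$, invoke Lemma \ref{tqlogt12} for the operator convexity of $f$ and the L\"owner--Heinz theorem for the operator concavity of $h$, and conclude via Theorem \ref{nikoufar-Ph.D.} (i). Up to that point the two arguments coincide.

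Where you go beyond the paper is in your last paragraph, and there you have put your finger on a real subtlety --- but your proposed resolution does not actually close it. You assert that the hypothesis ``$A$ and $B$ have spectra in $J_{\alpha}$'' is precisely what keeps the arguments $X_i=A_i^{-\beta/2}B_iA_i^{-\beta/2}$ and the contraction $T_1^*X_1T_1+T_2^*X_2T_2$ inside the interval $J_{\alpha}$ where $f$ is known to be operator convex. That implication is false in general: if $A_i$ has small eigenvalues then $A_i^{-\beta/2}$ has large norm, and $\|A_i^{-\beta/2}B_iA_i^{-\beta/2}\|$ can exceed the right endpoint $e^{\frac{2\alpha-1}{\alpha(1-\alpha)}}$ of $J_{\alpha}$ even though both $A_i$ and $B_i$ have spectra in $J_{\alpha}$ (take $\alpha=\tfrac12$, so $J_{1/2}=[0,1]$, $B_i=1$ and $A_i=\varepsilon$ with $\varepsilon$ small). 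So the Hansen--Pedersen--Jensen step is not justified under the stated spectral hypothesis; one would need to assume instead that the sandwiched operators $A^{-\beta/2}BA^{-\beta/2}$ have spectra in $J_{\alpha}$ (or impose two-sided bounds on $A$ and $B$ guaranteeing this). To be fair, the paper's own proof does not address this point at all, so the gap is inherited from the source rather than introduced by you; but since you explicitly claimed to have bridged it, the claim as written is the one step of your argument that does not hold up.
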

\begin{proof}
Consider $f(t):=t^{\alpha}\log t$ and $h(t):=t^{\beta}$. Then, $f(0)=0$ and $S_{\alpha,\beta}(A|B)=\Pi_{f\Delta h}(A,B)$.
Using Theorem \ref{nikoufar-Ph.D.} (i) and Lemma \ref{tqlogt12} we deduce the generalized perspective of the operator convex function $f$ and
the operator concave function $h$ is jointly convex so that the relative operator $(\alpha,\beta)$-entropy $S_{\alpha,\beta}(A|B)$ is jointly convex.
\end{proof}

{\small

}
\end{document}